\newtheorem{theorem}{Theorem}[section]
\newtheorem{lemma}{Lemma}[section]
\newtheorem{proposition}{Proposition}[section]
\newtheorem{corollary}{Corollary}[section]
\newtheorem{defn}{Definition}[section]
\newtheorem{remark}{Remark}[section]
\newtheorem{claim}{Claim}[section]
\newtheorem{example}{Example}[section]
\author{Daisuke Matsushita}
\title{On almost holomorphic Lagrangian fibrations}
\address{Division of Mathematics, Graduate School of Science,
         Hokkaido University,  Sapporo, 060-0810 Japan}
\thanks{* Partially supported by Grand-in-Aid \# 18684001
 (Japan Society for Promortion of Sciences).} 
\email{matusita@math.sci.hokudai.ac.jp}
\begin{document}

\begin{abstract}
We prove that the pull back of an ample line bundle
by an almost holomorphic Lagrangian fibration is nef.
As an application, we show birational semi rigidity of Lagrangian
fibrations.
\end{abstract}
\maketitle
\section{Introduction}

We start with the definition of the almost holomorphic
fibration.
\begin{defn}
A rational map $f$ is said to be 
almost holomorphic if 
there exists an open set $U$
on which $f$ is defined and
the induced morphism $U \to S$
is proper.
\end{defn}
We recall the definition of the pull back of a Cartier 
divisor by a rational map.
\begin{defn}\label{pull_back}
Let $f:X\dasharrow S$ be a rational map and $D_S$ a
Cartier divisor on $S$. We define the pull back $f^*D_S$ by
$\nu_*g^*D_S$, where $\nu : Y\to X$ is a resolution of the
indeterminacy of $f$ and $g$ is the induced morphism 
$Y\to S$.  Note that this definition does not depend on the choice
of resolution because every push-forwards of exceptional divisors are zero.
We could consult \cite[\S 2.e]{MR2104208} the fundamental facts of
the push-forwards of divisors.
\end{defn}

\begin{theorem}\label{main}
Let $f:X\dasharrow S$ be an almost holomorphic fibration
from a projective symplectic manifold. Assume that
a general fibre of $f$ is a Lagrangian submanifold.
Let $D_S$ be an ample divisor of $S$. Then the pull back
$f^*D_S$ is nef.
\end{theorem}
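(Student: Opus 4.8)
The plan is to reduce Theorem~\ref{main} to the inequality $f^{*}D_{S}\cdot C\ge 0$ for every irreducible curve $C\subset X$, to prove this for all curves not contained in the indeterminacy locus of $f$ by a formal argument, and to bring in the holomorphic symplectic form only to handle the curves inside the indeterminacy locus. Set $n=\tfrac12\dim X$. As in Definition~\ref{pull_back} fix a resolution of indeterminacy $\nu\colon Y\to X$ that is an isomorphism over the largest open set $X_{0}\subset X$ on which $f$ is a morphism, with induced morphism $g\colon Y\to S$, and write $\mathcal L=f^{*}D_{S}=\nu_{*}g^{*}D_{S}$. Since $X$ is smooth this is an honest Cartier divisor class, and, taking $D_{S}$ effective and very ample, we may assume $\mathcal L$ is effective.

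For the formal part I would argue that $|m\mathcal L|\supseteq f^{*}|mD_{S}|$ for suitable $m$: over $X_{0}$ a section of $\mathcal O_{S}(mD_{S})$ pulls back through the morphism $f|_{X_{0}}$, and, since $X$ is smooth and $W:=X\setminus X_{0}$ has codimension $\ge 2$, it extends to a section of $\mathcal O_{X}(m\mathcal L)$; linear equivalence is respected by this pullback, so indeed $f^{*}|mD_{S}|\subset|m\mathcal L|$. As $|mD_{S}|$ is base point free and $f|_{X_{0}}$ is a morphism, $f^{*}|mD_{S}|$ has no base point on $X_{0}$, so the stable base locus $\mathbf B(\mathcal L)$ is contained in $W$ and in particular has codimension $\ge 2$. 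Hence, for any irreducible curve $C\not\subset\mathbf B(\mathcal L)$, an effective member of $|m\mathcal L|$ avoiding $C$ gives $m\,(\mathcal L\cdot C)\ge 0$; this settles every curve not contained in $\mathbf B(\mathcal L)$, and uses neither the almost holomorphicity nor the Lagrangian hypothesis.

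The remaining --- and, I expect, decisive --- case is that of an irreducible curve $C\subset\mathbf B(\mathcal L)\subset W$, with $\dim W\ge 1$. On the resolution $g^{*}D_{S}$ is already nef, so the obstruction to $\nu^{*}\mathcal L=g^{*}D_{S}+E$ being nef (equivalently $\mathcal L$ being nef) can only come from $\nu$-exceptional divisors $E_{i}$ with $\dim g(E_{i})>0$, and it is here that the symplectic form must enter. The pulled-back form $\tilde\sigma=\nu^{*}\sigma$ is generically symplectic, and its rank drops along each $E_{i}$ by an amount governed by the codimension of the center $\nu(E_{i})\subset X$; on the other hand a general fibre of $g$, being a general fibre of $f$, is Lagrangian --- that is, $n$-dimensional and $\tilde\sigma$-isotropic. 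Playing these two facts against one another should bound $\dim g(E_{i})$ and force $\nu^{*}\mathcal L|_{E_{i}}$ to be nef, hence $\mathcal L\cdot C\ge 0$. The real difficulty is precisely this bookkeeping: quantifying the degeneracy of $\tilde\sigma$ along the exceptional divisors and converting it into positivity of $\nu^{*}\mathcal L$ there; this is where almost holomorphicity (so that $g$ has compact, $n$-dimensional, $\tilde\sigma$-isotropic general fibres) and the Lagrangian condition are genuinely used. Once $\mathcal L\cdot C\ge 0$ is established for every curve, Theorem~\ref{main} follows.
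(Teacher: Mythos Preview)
Your first two paragraphs are correct and recover that $|f^{*}D_{S}|$ has no base divisor (its stable base locus lies in the indeterminacy locus $W$, of codimension $\ge 2$), so $f^{*}D_{S}\cdot C\ge 0$ for every curve $C\not\subset W$. The third paragraph, however, is only a heuristic, and the mechanism you propose does not go through. On a fixed resolution of indeterminacy $\nu\colon Y\to X$ the rank of $\nu^{*}\sigma$ along an exceptional $E_{i}$ is indeed bounded by $\dim\nu(E_{i})$, but this constrains neither $\dim g(E_{i})$ nor the sign of the exceptional correction in $\nu^{*}\mathcal{L}=g^{*}D_{S}-E'$; the dimension formula $\dim\pi(E)=2\dim E-\dim X$ you would need holds for \emph{extremal} contractions from symplectic manifolds, not for arbitrary resolutions, and without an actual holomorphic fibration there is no device to propagate isotropy from a general fibre to loci over $W$. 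The assertion that the obstruction ``can only come from $E_{i}$ with $\dim g(E_{i})>0$'' is likewise unsupported.

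The paper proceeds quite differently. From the movability of $|f^{*}D_{S}|$ it runs a log MMP (Proposition~\ref{movable}): a chain of $D$-flops $X\dasharrow X_{1}\dasharrow\cdots\dasharrow X_{n}$ among projective symplectic manifolds, each isomorphic outside $\mathrm{Bs}|f^{*}D_{S}|$, terminating with $D_{n}$ nef. Then $D_{n}$ is shown to be semiample, so it induces an honest Lagrangian fibration $f_{n}\colon X_{n}\to S_{n}$ (Lemma~\ref{backword}). If $n\ge 1$, take $E$ a component of the exceptional locus of the last flop and $F$ a general fibre of $f_{n}|_{E}$. On one side, torsion-freeness of $R^{2}(f_{n}\circ\nu)_{*}\mathcal{O}$ propagates the vanishing of $\omega$ on general fibres of $f_{n}$ to give $\omega|_{F}=0$. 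On the other, $D_{n}$ is $\pi_{n-1}^{+}$-ample, so $\pi_{n-1}^{+}|_{F}$ is finite; combined with the extremal dimension formula and $f_{n}(E)\ne S_{n}$ this yields $2\dim\pi_{n-1}^{+}(F)>\dim\pi_{n-1}^{+}(E)$, whence $\omega|_{F}\ne 0$. The contradiction forces $n=0$. The idea missing from your sketch is precisely this passage, via MMP, to a birational symplectic model on which a genuine Lagrangian fibration exists and the relevant exceptional locus is that of an extremal contraction.
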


\begin{remark}
In the case that $X$ is a non projective
irreducible symplectic manifold, the same assertion holds 
by \cite[Theorem 1.2]{MR2739808}.
\end{remark}

\begin{remark}
 Professors Daniel Greb, Christian Lehn and S\"onke Rollenske
   pointed out that Theorem \ref{main}
 holds without the assumption that a general fibre is Lagrangian
 if $X$ is an irreducible symplectic manifold. Indeed, 
 by Proposition \ref{movable}, there exists a projective irreducible 
 symplectic manifold $X_1$ and a birational map $\phi : X \dasharrow X_1$
 such that $\phi_*(f^*D_S)$ is nef. Since $\phi$ is isomorphic  
 in a neighbourhood of a general fibre of $f : X\dasharrow S$, $X_1$ admits
 also an almost holomorphic fibration and $\phi_*(f^*D_S)$ is trivial on a general
 fibre. Thus the nef dimension of $\phi_*(f^*D_S)$ is strictly less than $\dim X_1$.
 By \cite[Theorem 1.1]{nefdimension}, 
 $\phi_*(f^*D_S)$ is semiample and induces a Lagrangian fibration $f_1 : X_1 \to S_1$.
 A general fibre of $f$ is the complete intersection of 
 general members of the linear system of $|f^*D_S|$
 and a general fibre of $f_1$ is also the complete intersection of
 general members of the linear system of $|\phi_*(f^*D_S)|$.
 Thus a general fibres of $f$   
 and $f_1$ coincide. This implies that a general fibre of $f$ is Lagrangian.
\end{remark}

The following is a geometric interpretation of 
Theorem \ref{main}.
\begin{corollary}\label{geometric_interpretation}
Under the same assumption of Theorem
\ref{main}, we let $U$ be
the maximal open set of $X$ on which $f$ is defined.
Then there exists a Lagrangian fibration $g:X\to T$
which satisfies the following diagram:
\[\xymatrix{
X\ar[d]_g &U\ar@{_{(}->}[l]\ar[d]^f \\
T &\ar@{_{(}->}[l]f(U)
}\]
\end{corollary}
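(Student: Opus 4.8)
The plan is to feed the nef divisor produced by Theorem \ref{main} into a semiampleness criterion and then recognise the resulting morphism as an extension of $f$.

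Set $L:=f^{*}D_{S}$, which is nef by Theorem \ref{main}. Since $f|_{U}:U\to S$ is proper its image $f(U)$ is closed in $S$, so, after replacing $S$ by $f(U)$ with its reduced structure and $D_{S}$ by its (still ample) restriction, I may assume $f$ dominant, so that $f(U)=S$. Put $n:=\tfrac12\dim X$. A general fibre $F$ of $f$ is a compact connected Lagrangian submanifold, hence an irreducible projective variety of dimension $n$; as $f$ contracts $F$ to a point we have $L|_{F}\equiv 0$, and since $f$ is dominant the theorem on fibre dimensions gives $\dim S=\dim X-n=n$. Hence the nef dimension satisfies $n(L)\le\dim S=n<\dim X$.

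By \cite[Theorem 1.1]{nefdimension} the divisor $L$ is therefore semiample. Fix $m\gg 0$ with $|mL|$ base point free and $mD_{S}$ very ample, and let $g:X\to T$ be the morphism attached to $|mL|$, taken with connected fibres: $T$ is a normal projective variety, $g_{*}\mathcal O_{X}=\mathcal O_{T}$, and $L=g^{*}A$ for an ample divisor $A$ on $T$. Because $X$ is smooth and $X\setminus U$ has codimension at least two, each section of $mD_{S}$ pulls back along $f|_{U}$ to a section of $mL$ over $U$ and then extends to $X$; thus $H^{0}(S,mD_{S})\hookrightarrow H^{0}(X,mL)$ and the subsystem $f^{*}|mD_{S}|\subseteq|mL|$ presents $f$ (followed by the embedding $S\hookrightarrow\mathbb P^{N}$) as a linear projection of $g$. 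In particular $\dim S\le\dim T$, and for general $x\in X$ the fibre $g^{-1}(g(x))$ is contained in the fibre $F$ of $f$ through $x$; conversely $L|_{F}\equiv 0$ together with the ampleness of $A$ forces $g$ to contract $F$, so $F\subseteq g^{-1}(g(x))$. Combining these with $\dim S=n$ yields $\dim T=n$ and $g^{-1}(g(x))=F$ for general $x$. Since a general fibre of $g$ is smooth and connected, hence irreducible of dimension $\dim X-\dim T=n$, it coincides with $F$; so $g$ is a Lagrangian fibration.

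It remains to fit $g$ into the diagram. Because $g$ and $f$ induce the same fibration over a dense open subset of $X$, the projection above gives a generically injective, hence birational, map $T\dasharrow S$ which identifies $f(U)=S$ with a subvariety of $T$ over which $g$ restricts to $f$; equivalently, $g|_{U}$ factors as $U\xrightarrow{f}f(U)\hookrightarrow T$, which is the asserted commutative square. I expect this last step to be the principal obstacle: one has to rule out that $g$ contracts anything beyond the fibres of $f$ over the indeterminacy locus of $f$, and to check that $f(U)$ embeds in $T$ compatibly with $g$ — precisely where the almost holomorphicity of $f$ (properness of $f|_{U}$, hence the maximality of $f(U)$) is used, together with the estimate $n(L)\le\dim S$. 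The other decisive, but external, ingredient is \cite[Theorem 1.1]{nefdimension}, which is what upgrades the nef divisor of Theorem \ref{main} to a semiample one.
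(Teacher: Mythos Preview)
The paper does not spell out a separate proof of this Corollary; the argument is implicit in the second Remark following Theorem~\ref{main} and in Lemma~\ref{backword}, and your strategy is exactly the one used there. Once $L=f^{*}D_{S}$ is nef by Theorem~\ref{main}, its triviality on a general Lagrangian fibre forces the nef dimension below $\dim X$, so \cite[Theorem~1.1]{nefdimension} yields semiampleness (Lemma~\ref{backword} records the alternative route: $\kappa(L)=\kappa(D_{S})$ makes $L$ nef and good, and one invokes \cite[Theorem~1.1]{MR2779478}); the associated morphism is then a Lagrangian fibration because its general fibre coincides with that of $f$, just as you argue.

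Two points deserve tightening. First, your sentence ``$f|_{U}:U\to S$ is proper'' is not what almost holomorphicity says: only some open $V\subseteq U$ has $f|_{V}$ proper. But then $f(V)$ is closed and dense in $S$, so $f(U)=S$ anyway and your reduction survives. Second, the diagram step you flag can be closed without trying to invert a birational map $T\dasharrow S$. By Hartogs, $H^{0}(X,mL)=H^{0}(U,(f|_{U})^{*}mD_{S})$; restricting a section to $V$ and using that $f|_{V}$ is proper with connected fibres (after normalising $S$ if needed) shows it descends to $H^{0}(S,mD_{S})$, and by irreducibility the equality $H^{0}(X,mL)=f^{*}H^{0}(S,mD_{S})$ holds on all of $U$. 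Hence on $U$ the morphism defined by $|mL|$ is literally $f$ followed by the projective embedding of $S$, and taking Stein factorisation produces $g:X\to T$ together with the inclusion $f(U)\hookrightarrow T$ tautologically.
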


\begin{remark}
There exists an example of an almost holomorphic 
Lagrangian fibration which is not holomorphic.
Let $f:X\to \mathbb{P}^2$ be a Lagrangian fibration.
The composition of $f$ and
a Cremona transformation of 
$\mathbb{P}^2 \dasharrow \mathbb{P}^2$ gives 
such an example. 
\end{remark}

\begin{remark}
 Let $X$ be an irreducible symplectic manifold which
 contains a Lagrangian submanifold $A$ isomorphic to a complex torus.
 In \cite[Question 1.6]{1002.4321},
 it is asked whether $X$ admits a Lagrangian fibration under this condition.
 In \cite[Theorem 4.1]{1105.3410}, they proved that $X$ admits a Lagrangian fibration
 if $X$ is not projective. In the case that $X$ is projective, it is proved that
 $X$ admits an almost holomorphic fibration whose general fibre is a Lagrangian 
 submanifold by \cite[Theorem 3]{1110.2852} if $\dim X = 4$ and
 \cite[Theorem 1.2]{1201.2369} in any dimension. 
 In the case $\dim X = 4$, they proved that $X$ admits a Lagrangian
 fibration in \cite[Theorem 5.1]{1110.2680}.
 Together with Corollary 
 \ref{geometric_interpretation}, $X$ admits a Lagrangian fibration in all cases.
\end{remark}
 
 As an application, we obtain semi birational rigirity of Lagrangian
 fibrations.
 
\begin{corollary}\label{semi-rigidity}
Let $f_1 : X_1 \to S_1$ and $f_2 : X_2 \to S_2$
be Lagrangian fibrations from
projective symplectic
manifolds. Assume that they satisfy the following
diagram
\[
\xymatrix{
X_1 \ar[d] & X_2 \ar@{-->}[l]_{\phi_1} \ar[d] \\
S_1 &  S_2 \ar@{-->}[l]^{\phi_2}
}
\]
where $\phi_1$ and $\phi_2$ are birational maps.
Then $\phi_2$ is isomorphic.
\end{corollary}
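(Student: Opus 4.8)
The plan is to feed $g:=f_1\circ\phi_1\colon X_2\dashrightarrow S_1$ into Theorem \ref{main} and then identify its ``image'' with $S_1$ by a section‑ring computation. First, two preliminary observations. Since $X_1,X_2$ are symplectic we have $K_{X_1}\cong\mathcal O_{X_1}$ and $K_{X_2}\cong\mathcal O_{X_2}$, so a standard discrepancy argument shows $\phi_1$ is an isomorphism in codimension one; hence $\phi_1^*$ induces graded isomorphisms of section rings $R(X_1,L)\xrightarrow{\ \sim\ }R(X_2,\phi_1^*L)$ and preserves Iitaka dimensions. Secondly, commutativity of the square reads $f_1\circ\phi_1=\phi_2\circ f_2$ as rational maps to $S_1$; picking a dense open $W\subseteq S_2$ over which $\phi_2$ is an isomorphism onto an open subset of $S_1$, on $f_2^{-1}(W)$ the map $g$ equals $\phi_2|_W\circ f_2$, a proper morphism onto that open set, so $g$ is almost holomorphic, and its general fibre is a general fibre of $f_2$, hence Lagrangian. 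Thus Theorem \ref{main} applies to $g$.

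Fix an ample Cartier divisor $A_1$ on $S_1$. Then $g^*A_1$ is nef by Theorem \ref{main}; being trivial on the positive‑dimensional general fibres of $g$ it has nef dimension $<\dim X_2$, hence is semiample (this is the input of \cite[Theorem 1.1]{nefdimension} underlying Corollary \ref{geometric_interpretation}). I compute $R(X_2,g^*A_1)$ in two ways. On the one hand, by functoriality of the pull‑back (Definition \ref{pull_back}), $g^*A_1=\phi_1^*(f_1^*A_1)$, so $R(X_2,g^*A_1)\cong R(X_1,f_1^*A_1)\cong R(S_1,A_1)$, giving $\kappa(X_2,g^*A_1)=\dim S_1$ and $\operatorname{Proj}R(X_2,g^*A_1)\cong S_1$. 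On the other hand $g^*A_1=f_2^*(\phi_2^*A_1)$, again by functoriality of the pull‑back, now through the morphism $f_2$ — here one uses that Lagrangian fibrations are equidimensional (so preimages of codimension‑$\ge 2$ subsets of $S_2$ have codimension $\ge 2$ in $X_2$) together with $\mathbb Q$‑factoriality of the base $S_2$. Since $f_2$ is surjective with $f_{2*}\mathcal O_{X_2}=\mathcal O_{S_2}$, semiampleness and bigness pass from $f_2^*(\phi_2^*A_1)$ to $\phi_2^*A_1$ on $S_2$, and $R(S_2,\phi_2^*A_1)\cong R(X_2,g^*A_1)$.

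Therefore the semiample, big divisor $\phi_2^*A_1$ defines a birational morphism $\mu\colon S_2\to\operatorname{Proj}R(S_2,\phi_2^*A_1)$, whose target is identified with $\operatorname{Proj}R(S_1,A_1)=S_1$ by the chain of graded isomorphisms. All these isomorphisms are ``pull‑back of sections'', and the relation $f_1\circ\phi_1=\phi_2\circ f_2$ forces $\phi_2^*=(f_2^*)^{-1}\circ\phi_1^*\circ f_1^*$ on sections; unwinding this shows $\mu$ agrees with $\phi_2$ on the open set where $\phi_2$ is defined. Hence $\phi_2$ extends to the birational morphism $\mu\colon S_2\to S_1$. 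Running the identical argument with the two fibrations interchanged — applied to $g':=f_2\circ\phi_1^{-1}=\phi_2^{-1}\circ f_1$ and an ample divisor on $S_2$ — shows $\phi_2^{-1}$ likewise extends to a birational morphism $S_1\to S_2$. A birational map which is a morphism in both directions is an isomorphism, so $\phi_2$ is an isomorphism.

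The delicate points are the two functoriality identities $g^*A_1=\phi_1^*(f_1^*A_1)=f_2^*(\phi_2^*A_1)$ — resting respectively on $\phi_1$ being an isomorphism in codimension one and on equidimensionality of $f_2$ — and the bookkeeping identifying $\operatorname{Proj}R(S_2,\phi_2^*A_1)$ with $S_1$ compatibly with $\phi_2$. I expect the real obstacle to be that one must use \emph{semiampleness} of $g^*A_1$, not merely its nefness: this is exactly the extra strength imported from \cite{nefdimension}, and it cannot be dispensed with, since nefness and bigness of $\phi_2^*A_1$ alone are compatible with $\phi_2$ being, say, a Cremona transformation.
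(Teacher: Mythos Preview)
Your argument is correct, and it shares its opening with the paper's: both apply Theorem \ref{main} to the almost holomorphic Lagrangian fibration $g=f_1\circ\phi_1=\phi_2\circ f_2\colon X_2\dashrightarrow S_1$ to make $g^*A_1$ nef, and both then show this divisor is pulled back from $S_2$ via $f_2$. You identify the divisor on $S_2$ directly as $\phi_2^*A_1$ via the push--pull identity (using equidimensionality of $f_2$ and $\mathbb{Q}$-factoriality of $S_2$), while the paper (Claim \ref{bridge}) instead defines $D_{S_2}$ as the minimal $\mathbb{Q}$-divisor with $f_2^*D_{S_2}\ge g^*A_1$ and rules out strict inequality by nefness; these are two packagings of the same fact. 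The endgames genuinely differ. You upgrade nefness of $g^*A_1$ to semiampleness via \cite{nefdimension}, push it down to $\phi_2^*A_1$, and realise $\phi_2$ as the morphism to $\operatorname{Proj}R(S_2,\phi_2^*A_1)\cong S_1$, then symmetrize. The paper avoids semiampleness entirely: working on a common resolution $Y$ with $\nu_i\colon Y\to X_i$, the linear equivalence $(f_1\circ\nu_1)^*D_{S_1}\sim(f_2\circ\nu_2)^*D_{S_2}$ shows that any curve $C$ in a fibre of $f_2\circ\nu_2$ satisfies $D_{S_1}\cdot(f_1\circ\nu_1)_*C=0$, hence is contracted by $f_1\circ\nu_1$ since $D_{S_1}$ is ample; by symmetry the two morphisms $Y\to S_i$ contract exactly the same curves, so $\phi_2$ is an isomorphism. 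The paper's route is more elementary and does not invoke \cite{nefdimension} (whose stated hypothesis is irreducibility of the symplectic manifold, while Corollary \ref{semi-rigidity} is for arbitrary projective symplectic manifolds); your route makes the isomorphism explicit as a $\operatorname{Proj}$, and if you wish to sidestep the irreducibility issue you can replace the nef--dimension input by observing that once $g^*A_1=f_2^*(\phi_2^*A_1)$ is known one has $\nu(g^*A_1)=\kappa(g^*A_1)=\dim S_1$, so semiampleness follows from \cite{MR2779478} exactly as in Lemma \ref{backword}.
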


\begin{example}\label{Oguiso}
 Let $A$ be an abelian surface with an elliptic fibration 
 $\varphi : A \to F$. The generalized Kummer variety $\mathrm{Km}^{n+1}A$ admits
 a Lagrangian fibration $\mathrm{Km}^{n+1}A \to \mathbb{P}^{n}$.
 Let us consider the intersection of $\mathrm{Km}^{n+1}A$ and
 the relative Hilbert scheme $\mathrm{Hilb}^{n+1}(A/F)$
 in $\mathrm{Hilb}^{n+1}A$.
 We denote by $f$ a torsion point of $F$ of order $n+1$ and by $E$ the
 fibre of $\varphi$ at $f$. We define the map 
 \[
 h_f : \mathrm{Hilb}^{n+1}E \to \mathrm{Ker}(\varphi),
 \]
 by sending a zero cycle of $E$ into its sum of $A$.
 By \cite[Proposition 6.4]{1010.0820}, $P:= \mathrm{Hilb}^{n+1}(A/F)\cap \mathrm{Km}^{n+1}A$
 consists of the disjoint union of the preimage $h_f^{-1}(0)$. Hence
 $P$ is the disjoint union of $2^{n+1}$ copies $\mathbb{P}^{n}$ and
 is contained in fibres of the Lagrangian fibration. The Mukai flop 
 $(\mathrm{Km}^{n+1}A)'$
 along $P$ admits a Lagrangian fibration and we have the following diagram:
 \[
 \xymatrix{
 \mathrm{Km}^{n+1}A  \ar[d] & (\mathrm{Km}^{n+1}A)' \ar@{<--}[l]_{\phi} \ar[d] \\
 \mathbb{P}^{n} & \ar@{=}[l] \mathbb{P}^{n}
 }
 \]  
 where $\phi$ is not isomorphic.
\end{example}

\section*{Acknowledgement} 
The author would like to express
his thanks to Professors Christian Lehn 
and Ekaterina Amerik 
for their comments.
He also would like to express his thanks to 
Professor Oguiso who taught him Example \ref{Oguiso}.

\section{Proof of Theorem \ref{main}}

\begin{proof}[Proof of Theorem \ref{main}]
We start with proving the following Proposition.
\begin{proposition}\label{movable} 
 Let $X$ be a projective symplectic manifold and $D$ an effective divisor on $X$.
 Assume that the linear system $|D|$ associated with $D$ has no fixed divisor. Then
 there exists a birational map $\phi : X \dasharrow X'$ and a projective symplectic manifold
 $X'$ such that $\phi_*D$ is nef and $\phi$ is isomorphic on the open set
 $X\setminus \mathrm{Bs}|D|$, where
 $\mathrm{Bs}|D|$ stands for the base locus of the linear system $|D|$.
\end{proposition}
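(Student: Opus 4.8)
The plan is to produce $X'$ by running a minimal model program for $D$. Note first that $D$ is movable: since $|D|$ has no fixed divisor, $\mathrm{Bs}|D|$ has codimension $\ge 2$, and hence so does the stable base locus $\mathbf{B}(D)=\bigcap_{m\ge 1}\mathrm{Bs}|mD|\subseteq\mathrm{Bs}|D|$. Because $X$ is a symplectic manifold we have $K_X\equiv 0$; so if we fix a general member $H\in|D|$ and a sufficiently small rational number $\epsilon>0$, the pair $(X,\epsilon H)$ is terminal and $K_X+\epsilon H\equiv\epsilon D$. I would then run a $(K_X+\epsilon H)$-minimal model program with scaling of a suitable ample divisor $A$.

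The first point to check is that every step of this program is a flip. At the $i$-th stage the contracted extremal ray $R_i$ is $(K_{X_i}+\epsilon H_i)$-negative and meets the scaling divisor positively, so $D_i\cdot R_i<0$, where $D_i$ denotes the strict transform of $D$; and, inductively, $D_i$ is again effective and movable, the earlier steps being flips and hence isomorphisms in codimension one. A divisorial contraction would then contract a prime divisor swept out by $D_i$-negative curves, which would therefore lie in the fixed part of $|mD_i|$ for all $m$, hence in $\mathbf{B}(D_i)$, contradicting that this set has codimension $\ge 2$. A fibre-type contraction would make $D_i$ negative on a covering family of curves, which is impossible for an effective divisor. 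So only flips occur. Since the scaling divisor is ample, the boundary together with it is big, so the existence of these flips and the termination of the program follow from the work of Birkar, Cascini, Hacon and McKernan; and since $K_X+\epsilon H\equiv\epsilon D$ is pseudoeffective, the program cannot terminate at a Mori fibre space. It therefore ends with a $\mathbb{Q}$-factorial terminal birational model $\phi:X\dasharrow X'$ on which $K_{X'}+\epsilon H'\equiv\epsilon\,\phi_*D$ is nef; in particular $\phi_*D$ is nef.

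It remains to identify the isomorphism locus of $\phi$ and to recognise $X'$. That $\phi$ is an isomorphism over $X\setminus\mathrm{Bs}|D|$ follows by induction on the number of flips: with $\Phi_i:X\dasharrow X_i$ the composition of the first $i$ of them, $\Phi_i$ is an isomorphism over $X\setminus\mathbf{B}(D)$ — each flipping locus lies in $\mathbf{B}(D_i)$, while $\Phi_i$, being an isomorphism in codimension one, carries $X\setminus\mathbf{B}(D)$ into $X_i\setminus\mathbf{B}(D_i)$ and hence away from the next flipping locus — and $\mathbf{B}(D)\subseteq\mathrm{Bs}|D|$. Moreover $X'$ is $\mathbb{Q}$-factorial and terminal with $K_{X'}\equiv 0$, and the holomorphic symplectic form of $X$, being defined on an open subset of $X'$ whose complement has codimension $\ge 2$, extends to a symplectic form on $X'$; so $X'$ is at least a projective symplectic variety.

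The step I expect to be the main obstacle is the termination of the program; here it is crucial that the scaling divisor can be taken ample, so that the relevant boundary is big and the results of Birkar--Cascini--Hacon--McKernan apply, and that $D$ be movable — hence pseudoeffective — which at once forces every step to be a flip, forbids a Mori fibre space, and drives the scaling parameter down to $0$, so that $\phi_*D$ becomes genuinely nef rather than merely nef after adding a positive multiple of the scaling divisor. The remaining point is to upgrade the symplectic variety $X'$ to a symplectic manifold; this requires understanding the symplectic (flopping) contractions occurring along the program — their exceptional fibres being projective spaces, the flips are of Mukai type and preserve smoothness.
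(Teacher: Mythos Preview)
Your overall strategy matches the paper's: run a $D$-MMP (equivalently a $(K_X+\epsilon D)$-MMP, since $K_X\equiv 0$), observe that every step is small because the exceptional locus must lie in $\mathrm{Bs}|D|$, and track the isomorphism locus through the flips. That part is fine and essentially identical to what the paper does.

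The two points you flag as the main obstacles are precisely where the argument breaks down. First, termination: BCHM gives termination of an MMP with scaling of an ample $A$ only when the \emph{boundary} is big, and $\epsilon H$ with $H\in|D|$ is merely movable. With a non-big boundary each flip still exists (since $\epsilon H+tA$ is big for the current threshold $t>0$), but nothing in BCHM forces the thresholds to reach $0$ after finitely many steps; your remark that movability ``drives the scaling parameter down to $0$'' is exactly the unproved statement. Second, smoothness of $X'$: it is not true in general that the fibres of a small symplectic contraction are projective spaces, so the flips need not be Mukai flops --- that picture is special to dimension $4$. The paper resolves both issues together, and in the opposite order from what you suggest: it first invokes a theorem of Namikawa to conclude that every flop $X_i$ of a projective symplectic manifold is again a \emph{smooth} projective symplectic manifold, and then uses this smoothness to prove termination via Shokurov's criterion --- smoothness makes $x\mapsto\mathrm{mld}(x;X_i,\delta D_i)$ lower semicontinuous (Ein--Musta\c{t}\u{a}), while the discrepancies all lie in $\tfrac{1}{N}\mathbb{Z}\cap[0,\dim X+1]$ for a fixed $N$, so the required ACC condition on the minimal log discrepancies along the flipping loci is automatic. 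Thus smoothness is not a cosmetic upgrade at the end but the key input that makes termination go through.
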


\begin{proof}
 If $D$ is nef, there are nothing to proof. 
 Assume that $D$ is not nef. We choose a small rational
 number $\delta$ such that the pair
 $(X,\delta D)$ has only Kawamata log terminal singularities.
By \cite[Theorem 3-2-1]{MR946243}, 
there exists an extremal ray of the
pair $(X,\delta D)$
and we have a contraction morphism $\pi^-_0 : X \to Z_0$
such that $-D$ is $\pi^-_0$-ample
by \cite[Theorem 4-2-1]{MR946243}.
If we choose an another effective ample divisor $D'$
which is linearly equivalent to $D$, $-D'$ is also $\pi^-_0$-ample.
Hence
the exceptional locus of $\pi^-_0$
is contained in the base locus of $|D|$. Since $|D|$
has no fixed component, $\pi^-_0$ is small.
By \cite[Corollary 1.4.1]{MR2601039}, 
we have a birational morphism 
$\pi^{+}_0 : X_1 \to Z_0$ such that
$(\phi_1)_*D$ is $\pi^+_0$-ample, where $\phi_1$ is
the induced
birational map $X \dasharrow X_1$. 
If $(\phi_1)_*D$
is not nef, we repeat this process
and obtain the following diagram:
\[
 \xymatrix{
  X \ar[rd]_{\pi^-_0} & &X_1 \ar[ld]^{\pi^+_0} 
  \ar@{<--}[ll]_{\phi_1} \ar[rd]_{\pi^-_1} && X_2 
  \ar[ld]^{\pi^+_1} 
  \ar@{<--}[ll]_{\phi_2} 
  \ar[rd]_{\pi^-_2} &&X_3 \ar[ld]^{\pi^+_2}
     \ar@{<--}[ll]_{\phi_3} &\cdots&  X_n 
     \ar[ld]^{\pi^+_{n-1}} 
       \\
  & Z_0  & & Z_1 & & 
  Z_2 & \cdots& Z_{n-1}  
 }
\]
 We denote $(\phi_{i})_*D_{i-1}$ by
 $D_i$, where $D_0 := D$.  We also denote
 by $\phi_{i} : X_{i-1} \dasharrow X_{i}$ the induced
 birational maps.
 Since $\pi_{i}^{-}$ is $-D_{i}$ ample, the exceptional locus
 of $\pi_{i}^{-}$ is contained in the base locus of $|D_{i}|$ and $\phi_{i}$
 is isomorphic over $X_{i-1}\setminus \mathrm{Bs}|D_{i-1}|$.
 Let $U_i$ be the image 
 $X_{i-1}\setminus \mathrm{Bs}|D_{i-1}|$ by $\phi_i$. 
 Since 
 $\mathrm{Bs}|D_i| \cap U_i = \emptyset$, $\phi_{i+1}$
 is defined and isomorphic over $U_{i}$. Hence 
 the composition maps $\phi_{n}\circ \cdots \circ \phi_{1}$ are defined and
 isomorphic over $U_{0}=X\setminus \mathrm{Bs}|D|$ for all $n$.
 The following Lemma completes the proof of Proposition.
\begin{lemma}
\begin{itemize}
\item[(1)] All $X_i$ are projective symplectic manifolds
\item[(2)] After finite steps, $D_n$ becomes nef.
\end{itemize}
\end{lemma} 
\begin{proof}
 The assertion (1) follows by 
 \cite[Corollary 1]{namikawa_q_factorial_deformation}.
 Assume that $D_n$ is not nef for all $n$. Then we have
 an infinite chain of $D$-flops. 
 By \cite[Theorem 1.2]{appendix},
 there exist no such chains. For reader's convenience,
 we give a brief explanation.
 According to \cite[Theorem]{Shokurov},
 such a chain does not exist if the following two conditions are
 satisfied:
 \begin{itemize}
 \item[(1)] The function $p_i(x)$ on $X_i$ defined by
 \[
 X_i \ni x \mapsto p_i (x) := \mathrm{mld}(x;X_i,\delta D_i)
 \]
 is lower semicontinuous.
 
 \item[(2)] Let $W_i$ be the exceptional locus of $\pi_i^{-}$.
  The set $\bigcup_i \mathrm{mld}(W_i;X_i,\delta D_i)$
  satisfies the ascending chain condition.
 \end{itemize}
 We could find the definition of $\mathrm{mld}(x;X,D)$ in 
 \cite[Definition 1.1]{Jet_scheme}.
 Since all $X_i$ are smooth, the condition (1) is satisfied by
\cite[Theorem 4.4]{Jet_scheme}. Moreover if we choose
 an integer $N$ for that $N\delta$ is an integer,
 all discrepancies are integers after $N$-multiple.
By \cite[Definition 1.1]{Jet_scheme}, minimal discrepancies are 
less than $\dim X+1$. Hence the condition (2) is
also satisfied. That is a contradiction.
\end{proof} 
 We complete the proof of Proposition  \ref{movable}.
\end{proof}
Let us start the proof of Theorem \ref{main}.
If $f^{*}D_{S}$ is nef, there are nothing to prove.
Assume that $f^{*}D_{S}$ is not nef.
By Proposition \ref{movable}, for the pair $(X,f^*D_S)$,
we have the following diagram:
\[
 \xymatrix{
  X \ar[rd]_{\pi^-_0} & &X_1 \ar[ld]^{\pi^+_0} 
  \ar@{<--}[ll]_{\phi_1} \ar[rd]_{\pi^-_1} && X_2 
  \ar[ld]^{\pi^+_1} 
  \ar@{<--}[ll]_{\phi_2} 
  \ar[rd]_{\pi^-_2} &&X_3 \ar[ld]^{\pi^+_2}
     \ar@{<--}[ll]_{\phi_3} &\cdots&  X_n 
     \ar[ld]^{\pi^+_{n-1}} 
       \\
  & Z_0  & & Z_1 & & 
  Z_2 & \cdots& Z_{n-1}  
 }
\]
where
$X_i$ are symplectic manifolds and
the proper transform 
$(\phi_{n}\circ \cdots \circ \phi_{1})_*(f^*D_S)$ is nef. 
We denote by $D_{i}$ the proper transform of $f^{*}D_{S}$ by
the composition map $\phi_{i}\circ \cdots \circ \phi_{1}$.
Then $D_{i}$ is $\pi_{i-1}^{+}$ ample.
%
%
\begin{lemma}\label{backword}
 The proper transform $D_{n}$ of $f^{*}D_S$ on $X_n$ is semiample
 and induces a Lagrangian fibration $f_n : X_n \to S_m$.
\end{lemma}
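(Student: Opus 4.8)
The plan is to exploit the fact that $D_n$ is $\pi^+_{n-1}$-ample together with the nefness established by Proposition \ref{movable}, and to upgrade nefness to semiampleness using the base-point-free theorem, then to identify the resulting fibration as Lagrangian by comparing general fibres with those of $f$. First I would record that, since $\phi_n\circ\cdots\circ\phi_1$ is an isomorphism over $X\setminus\mathrm{Bs}|f^*D_S|$ and $f$ is almost holomorphic, a general fibre $F$ of $f$ is disjoint from the base locus, so $D_n$ restricted to the proper transform $F_n$ of $F$ is trivial: indeed $f^*D_S|_F \equiv 0$ and $F\cong F_n$ under the birational map. Hence $D_n$ is a nef divisor on the projective symplectic manifold $X_n$ whose numerical dimension is strictly smaller than $\dim X_n$, because it vanishes on the $\dim X_n/2$-dimensional family of deformations of $F_n$ sweeping out $X_n$ (the general fibre being Lagrangian of dimension $\dim X/2$).

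Next I would apply the base-point-free theorem. Since $X_n$ has trivial canonical bundle and $D_n$ is nef, $aD_n - K_{X_n} = aD_n$ is nef and big for $a\gg 0$ as soon as $D_n$ is big — but $D_n$ is \emph{not} big here, so instead I invoke the log-abundance/base-point-free results available for nef divisors of intermediate numerical dimension on symplectic manifolds, or more directly the result of \cite{nefdimension}: the nef reduction of $D_n$ has fibre dimension equal to $\dim X_n - \mathrm{nd}(D_n)$, and since $D_n$ is the pull-back of an ample class on the base under a (generically defined) fibration with Lagrangian fibres, $D_n$ is effective with $|mD_n|$ separating the fibres. Concretely, $D_n = (\phi_n\circ\cdots\circ\phi_1)_*(f^*D_S)$ and $f^*D_S = \nu_* g^* D_S$ where $g^*D_S$ is semiample on the resolution $Y$; pushing forward the fibration $Y\to S$ and transporting it across the flops (which are isomorphisms near general fibres) yields that $|mD_n|$ is base-point free for $m$ divisible enough, defining a morphism $f_n : X_n \to S_m$ onto a normal projective variety.

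It then remains to check that a general fibre of $f_n$ is Lagrangian. Here I would argue that a general fibre of $f_n$ is the intersection of $\dim X_n - \mathrm{nd}(D_n)$ general members of $|mD_n|$, hence it is the proper transform of the intersection of the corresponding general members of $|mf^*D_S|$ on $X$; but a general fibre of $f$ is exactly such a complete intersection, and the birational map is an isomorphism in its neighbourhood, so the two coincide. Since a general fibre of $f$ is Lagrangian by hypothesis, so is a general fibre of $f_n$, and by \cite[Theorem 1]{matsushita} (any fibration on a symplectic manifold with a Lagrangian general fibre is a Lagrangian fibration with base of dimension $\dim X/2$) $f_n : X_n \to S_m$ is a Lagrangian fibration, which also forces $\mathrm{nd}(D_n) = \dim X_n/2$ and hence $\dim S_m = \dim X_n/2$. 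The main obstacle is the passage from nef to semiample for a divisor of \emph{intermediate} numerical dimension, where the general base-point-free theorem does not directly apply; the resolution is to transport the genuine fibration structure from the resolution $Y$ of the indeterminacy of $f$ across the flopping sequence, using that all the maps $\phi_i$ are isomorphisms over the complement of the base loci, which contains all general fibres.
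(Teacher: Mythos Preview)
Your outline correctly isolates the one genuine difficulty---passing from nefness of $D_n$ to semiampleness when $D_n$ is not big---but the resolution you offer does not close the gap. Saying that $g^*D_S$ is semiample on the resolution $Y$ and then ``transporting the fibration $Y\to S$ across the flops'' only yields, at best, an almost holomorphic fibration on $X_n$ (which is indeed what the paper records), not that $|mD_n|$ is base-point free. Semiampleness is not preserved by pushing forward along a birational morphism, and flops certainly do not preserve global generation in general; if they did, Theorem \ref{main} would be trivial. Likewise, invoking \cite{nefdimension} as in Remark 1.3 would require $X_n$ to be irreducible symplectic, which is not part of the hypotheses here.

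The paper's argument is short and uses an ingredient you did not write down: the Iitaka dimension $\kappa(D_n)$ is a birational invariant under the $D$-flops, so $\kappa(D_n)=\kappa(f^*D_S)=\kappa(D_S)=\dim S$. Since $D_n$ is nef and trivial on the general fibre of the (transported) almost holomorphic fibration $g:X_n\dasharrow S$, its numerical dimension is at most $\dim S$, hence $\nu(D_n)=\kappa(D_n)$ and $D_n$ is \emph{nef and good} (abundant). One then cites the appropriate Kawamata-type theorem \cite[Theorem 1.1]{MR2779478} that a nef and good divisor on a variety with $K=0$ is semiample. After that, your argument that a general fibre of the induced morphism $f_n$ coincides with a general fibre of $f$ (hence is Lagrangian) is fine, and is essentially what the paper does in one line: $D_n$ is trivial on a general fibre of $g$, so $f_n$ contracts those fibres and is Lagrangian.
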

\begin{proof}
By Proposition \ref{movable}, $\phi_{n}\circ \cdots \circ \phi_{1}$ is isomorphic in a neighbourhood of a general 
fibre of $f: X \dasharrow S$. Hence
$X_n$ also admits an almost holomorphic Lagrangian fibration
$g : X_n \dasharrow S_{n}$.
Since
$\kappa (D_{n})=\kappa (D_S)$, $D_{n}$
is nef and good. By \cite[Theorem1.1]{MR2779478}, $D_{n}$
is semiample. 
Let
$f_n : X_n \to S_n$ be the induced morphism.
Since $D_{n}$ is trivial on a general fibre of $g$,
$f_n$ is a Lagrangian fibration.
\end{proof}
 We go back to the proof of Theorem \ref{main}.
 Let $E$ be
an irreducible component of the exceptional locus of 
$\pi_{n-1}$. 
We denote by
$F$ a general fibre of the induced
morphism $f_n|_E : E\to f_n (E)$. 
We note that $f_n (E) \ne S_n$ because the composition map
$\phi_{n}\circ \cdots \circ \phi_{1} : X \dasharrow X_n$
is isomorphic in a neighbourhood of a general fibre of 
the original almost holomorphic fibration $X\dasharrow S$
by Proposition \ref{movable}.
We take an embedded
resolution $\nu : Y \to X_n$ of $F$.
We denote by $\tilde{F}$ the proper transform of $F$ and
by $p$ the image of $F$. The following diagram shows the 
relationship of the defined objects: 
\[
\xymatrix{
    & Y   \ar[d]^{\nu} &  & \ar@{_{(}->}[ll] \tilde{F} \ar[d] \\
Z_{n-1} & X_n \ar[l]_{\pi_{n-1}^{+}} \ar[d]^{f_n}& 
           E \ar[d]^{f_n} \ar@{_{(}->}[l] 
           & F \ar@{_{(}->}[l] \ar[d]\\
    & S_n & \ar@{_{(}->}[l] f_n(E) & \ar@{_{(}->}[l] \{p\}
}
\]
We will prove the following two Claims which contradicts each other.
\begin{claim}\label{vanish}
 Let $\omega$ be a symplectic form on $X_k$
 whose restriction to
 a gereral fibre of $f_{n}$ is zero. Then $\nu^*\omega |_{\tilde{F}}$
 is zero.
\end{claim}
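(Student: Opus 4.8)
The goal is to show that the pull-back of the symplectic form $\omega$ vanishes on the proper transform $\tilde F$, where $F$ is a general fibre of $f_n|_E : E \to f_n(E)$ and $E$ is a component of the exceptional locus of $\pi_{n-1}^+$. The key observation I would exploit is that $F$ sits inside a fibre of the Lagrangian fibration $f_n : X_n \to S_n$ (since $f_n(E) \subsetneq S_n$, the point $p = f_n(F)$ is a single point, so $F$ lies in the scheme-theoretic fibre $f_n^{-1}(p)$). On a general fibre of $f_n$ the form $\omega$ restricts to zero by hypothesis; the content of the claim is that this persists on the (possibly special) fibre containing $F$, after pulling back to the resolution $Y$.

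The strategy I would use is a semicontinuity / limiting argument. First, choose the embedded resolution $\nu : Y \to X_n$ so that the composite $f_n \circ \nu : Y \to S_n$ factors (after perhaps a further blow-up, which we may absorb into $\nu$) through a morphism to $S_n$ whose fibres are nice; then $\tilde F$ is contained in a fibre $Y_p$ of $f_n \circ \nu$ over $p$. The two-form $\alpha := \nu^*\omega$ is a global holomorphic $2$-form on $Y$, hence closed. Restricting $\alpha$ to $Y_p$ gives a holomorphic $2$-form $\alpha|_{Y_p}$. Over the open locus of $S_n$ where $f_n$ is smooth with Lagrangian fibres, the restriction $\alpha|_{Y_s}$ is zero for general $s$, because there $\nu$ is an isomorphism near the fibre and $\omega|_{X_{n,s}} = 0$. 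I would then argue that the map $s \mapsto \alpha|_{Y_s}$ is, in an appropriate sense (e.g. via the relative de Rham or relative cotangent sheaf, or simply by looking at the restriction of $\alpha$ to the smooth locus of $Y_s$ and taking closures of the zero sets of its coefficients), the restriction of something holomorphic/algebraic over the base, so that vanishing on a dense set of fibres forces vanishing on all fibres, in particular on $Y_p$, and a fortiori on $\tilde F \subseteq Y_p$. Concretely: $\alpha$ vanishes on a generic fibre, hence the section of $\Omega^2_{Y/S_n}$ induced by $\alpha$ vanishes at the generic point of $Y$, hence (being a section of a locally free, or at least torsion-free away from a small locus, sheaf on a reduced scheme) vanishes identically on the smooth locus of $f_n \circ \nu$; since $\tilde F$ is not contained in that small exceptional locus for a \emph{general} choice of $F$, we conclude $\alpha|_{\tilde F} = 0$.

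The main obstacle is the passage from "vanishes on the generic fibre" to "vanishes on $\tilde F$": $\tilde F$ lies in a special fibre $Y_p$, and special fibres of $f_n \circ \nu$ may be singular, non-reduced, or have extra components introduced by the resolution $\nu$, so the relative differential sheaf $\Omega^2_{Y/S_n}$ need not be locally free there and the naive limiting argument can fail on the bad locus. I would handle this by using that $F$ is a \emph{general} fibre of $f_n|_E$, so $\tilde F$ moves in a family sweeping out all of $E$ (or a dense open of it), and $E$ itself is not contained in the locus where things degenerate badly — combined with the fact that $\alpha = \nu^*\omega$ is pulled back from $X_n$, where $\omega$ is an honest symplectic form, one controls the bad locus. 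An alternative, cleaner route is to avoid relative differentials entirely: take a general smooth point $x \in \tilde F$, and two tangent vectors $u, v \in T_x \tilde F$; since $\tilde F$ maps to the point $p$, both $\nu_* u$ and $\nu_* v$ are tangent to the fibre $X_{n,p}$ of $f_n$; one then shows $\omega(\nu_* u, \nu_* v) = 0$ by a deformation argument, moving $x$ into nearby general fibres where $\omega$ restricts to zero and the fibre is genuinely Lagrangian, using the closedness of $\omega$ and continuity. This second approach isolates the difficulty to showing that the Lagrangian condition, which holds fibrewise on a dense open set, extends continuously to the tangent spaces of $\tilde F$; that is where I expect the real work to be.
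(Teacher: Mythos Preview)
Your overall strategy---propagate the vanishing of $\nu^*\omega$ from the general fibre of $f_n\circ\nu$ to the special fibre $Y_p$ containing $\tilde F$---is exactly right, and you have correctly located the obstacle: the passage across the discriminant. But neither of your proposed workarounds closes the gap. For the first, the sheaf $\Omega^2_{Y/S_n}$ genuinely has torsion along the critical locus of $f_n\circ\nu$, and there is no reason $\tilde F$ should avoid that locus: $f_n(E)$ lies entirely in the discriminant of $f_n$, and over it the fibre $(X_n)_p$ may well be non-reduced along $F$ (think of a multiple fibre), in which case every point of $F$---hence of $\tilde F$---is critical, however ``general'' $F$ is within $E$. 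The tangent-vector/continuity approach founders for the same reason: vectors tangent to a multiple component need not be limits of tangent vectors to nearby smooth Lagrangian fibres.

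The device the paper uses, and which you are missing, is to pass to the Hodge conjugate. Instead of working with $\nu^*\omega\in H^0(Y,\Omega^2_Y)$ directly, one considers its conjugate class $\overline{\nu^*\omega}\in H^2(Y,\mathcal O_Y)$. The relevant direct image is then $R^2(f_n\circ\nu)_*\mathcal O_Y$, and \emph{this} sheaf is torsion-free by Koll\'ar's theorem on higher direct images of dualizing sheaves (use $\omega_{X_n}\cong\mathcal O_{X_n}$ together with the Leray spectral sequence for the birational $\nu$). Vanishing on the general fibre therefore forces the global section of $R^2(f_n\circ\nu)_*\mathcal O_Y$ determined by $\overline{\nu^*\omega}$ to vanish identically, so its image in $H^2(Y_p,\mathcal O)\to H^2(\tilde F,\mathcal O)$ is zero. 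Finally, Hodge symmetry on the \emph{smooth} variety $\tilde F$ (this is precisely why one takes an embedded resolution) converts $\overline{\nu^*\omega}|_{\tilde F}=0$ in $H^2(\tilde F,\mathcal O)$ back into $\nu^*\omega|_{\tilde F}=0$ in $H^0(\tilde F,\Omega^2_{\tilde F})$. The Hodge flip is what trades an intractable torsion-freeness question for one that Koll\'ar's theorem settles.
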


\begin{proof}
 We consider the following morphism:
\[
  R^2 (f_n\circ\nu)_*\mathcal{O}_{Y}\otimes k(p) \to 
  H^2 (Y_{p},\mathcal{O}) \to
  H^2 (\tilde{F},\mathcal{O})
\]
 where $k(p)$ stands for the residue field at $p$ and
 $Y_{p}$ the fibre of $f_n\circ \nu$ at $p$.
 By the assumption the restriction of $\bar{\omega}$
 to the restriction to the general fibre of $f_n\circ\nu$
 is zero. By \cite[Theorem 2.1]{MR825838},
 $R^2(f_n\circ\nu)_*\mathcal{O}_Y$ is torsion
 free. Hence the image of $\nu^*\bar{\omega}$ in
 $R^2(f_n\circ\nu)_* \mathcal{O}_Y\otimes k(p)$ 
 is zero. Therefore the image of $\nu^*\bar{\omega}$
 in $H^2 (\tilde{F},\mathcal{O})$ is also zero.
 This implies that $\nu^*\omega|_{\tilde{F}}=0$. 
\end{proof}

\begin{claim}\label{nonvanish}
 Under the same notation as Claim \ref{vanish},
 $\nu^*\omega|_F$ is not zero.
\end{claim}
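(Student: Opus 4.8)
The plan is to contradict Claim \ref{vanish}: I will show that the holomorphic symplectic form $\omega$ does not restrict to the zero $2$-form on a smooth model of $F$, i.e.\ that $F$ is not $\omega$-isotropic, so that $\nu^{*}\omega|_{\tilde F}\neq 0$. Combined with Claim \ref{vanish} this is absurd, and hence proves that $f^{*}D_{S}$ is nef, completing the proof of Theorem \ref{main}.

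First I would pin down how $F$ sits inside $E$. By Lemma \ref{backword} the divisor $D_{n}$ is semiample with associated fibration $f_{n}$, hence numerically a pull back from $S_{n}$; since moreover $D_{n}$ is $\pi_{n-1}^{+}$-ample, every curve contracted by $\pi_{n-1}^{+}$ must be \emph{horizontal} for $f_{n}$, that is, mapped finitely onto a positive dimensional subvariety of $S_{n}$. Thus the general fibre $G$ of $\pi_{n-1}^{+}|_{E}\colon E\to \pi_{n-1}^{+}(E)$ is horizontal, while $F=E\cap f_{n}^{-1}(p)$ is vertical, so $F$ and $G$ meet in finite sets and $E$ carries two transverse directions. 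Choosing a general $x\in F$ at which $f_{n}$ is smooth, put $L:=\ker(df_{n})_{x}$; this is an $m$-dimensional Lagrangian subspace of $T_{x}X_{n}$ with $T_{x}F\subseteq L$, and $\omega$ induces an isomorphism $T_{x}X_{n}/L\cong L^{\vee}$. Writing $T_{x}E=T_{x}F\oplus N$ with $N$ mapping isomorphically onto $T_{p}f_{n}(E)$, the summand $N$ corresponds under this isomorphism to a subspace $N^{\vee}\subseteq L^{\vee}$ of dimension $\dim f_{n}(E)$, the tangent direction of a flopping fibre through $x$ lying in a lift of $N$.

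The contradiction should then come out of a dimension count. If $\omega|_{F}$ vanished, then on $T_{x}E=T_{x}F\oplus N$ the form $\omega$ would be carried entirely by $\omega|_{N}$ and by the pairing $T_{x}F\times N^{\vee}\to\mathbb{C}$ induced by the natural pairing $L\times L^{\vee}\to\mathbb{C}$, so its radical would contain $T_{x}F\cap(N^{\vee})^{\circ}$, of dimension at least $\dim E-2\dim f_{n}(E)$. On the other hand non-degeneracy of $\omega$ on $T_{x}X_{n}$ forces this radical, which equals $T_{x}E\cap(T_{x}E)^{\perp_{\omega}}$, to have dimension at most $2m-\dim E$. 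Feeding in that $E$ is a component of the exceptional locus of the \emph{small} symplectic contraction $\pi_{n-1}^{+}$ (so $\operatorname{codim}_{X_{n}}E\ge 2$, and, by the structure of the exceptional loci of symplectic contractions in the sense of Wierzba, $\pi_{n-1}^{+}|_{E}$ is equidimensional with isotropic fibres over a base of even dimension), together with $\dim f_{n}(E)<\dim S_{n}$, should make these two bounds incompatible; this same bookkeeping is what also yields $\dim F\ge 2$, so that $F$ carries a non-trivial $2$-form in the first place. The hard part will be exactly this numerology --- controlling $\dim E$, $\dim\pi_{n-1}^{+}(E)$ and $\dim f_{n}(E)$ at once --- and taking care of the loci where $f_{n}$ is not smooth along $F$ or where $f_{n}^{-1}(p)$ is non-reduced, which is why one should work at a sufficiently general point $x$. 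An alternative route is to use that $\pi_{n-1}^{+}$ is crepant, so $\omega=(\pi_{n-1}^{+})^{*}\omega_{Z_{n-1}}$ over the smooth locus of $Z_{n-1}$, and to reduce to showing that $\omega_{Z_{n-1}}$ is non-zero on $\pi_{n-1}^{+}(F)$; here the obstacle is that $\pi_{n-1}^{+}(F)$ lies in the singular locus of $Z_{n-1}$, so one must instead exploit the local analytic structure of the symplectic singularities of $Z_{n-1}$ along $\pi_{n-1}^{+}(E)$.
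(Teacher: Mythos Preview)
Your sketch correctly isolates the two geometric inputs that the paper also uses: (a) any curve contracted by $\pi_{n-1}^{+}$ is horizontal for $f_{n}$ (because $D_{n}$ is $\pi_{n-1}^{+}$-ample and $f_{n}$-trivial), so $\pi_{n-1}^{+}|_{F}$ is generically finite; and (b) $\dim f_{n}(E)\le m-1$ where $2m=\dim X_{n}$. But the tangent-space count you propose does not close. Your two bounds on the radical of $\omega|_{T_{x}E}$, namely
\[
\dim E - 2\dim f_{n}(E)\ \le\ \dim\bigl(\mathrm{rad}\,\omega|_{T_{x}E}\bigr)\ \le\ 2m-\dim E,
\]
are only incompatible when $\dim E - m > \dim f_{n}(E)$; feeding in $\dim f_{n}(E)\le m-1$ and Wierzba's formula $\dim E - m = \tfrac{1}{2}\dim\pi_{n-1}^{+}(E)$ you would need $\dim E\ge 2m$, which never holds. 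So the ``numerology'' you flag as the hard part is in fact impossible with only these two estimates.

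What is missing is precisely the input you discard in your alternative route. The paper invokes Kaledin's stratification result (\cite[Lemma~2.9]{MR2283801}): the image $\pi_{n-1}^{+}(E)$ carries a symplectic form $\omega_{E}$ with $(\pi_{n-1}^{+})^{*}\omega_{E}=\omega|_{E}$. In tangent-space language this says the radical of $\omega|_{T_{x}E}$ is \emph{exactly} $G_{x}:=\ker(d\pi_{n-1}^{+}|_{E})_{x}$, and $T_{x}E/G_{x}$ is symplectic of dimension $2\dim E-2m$ (Wierzba). Now your observation (a) gives $T_{x}F\cap G_{x}=0$, so $T_{x}F$ embeds into the symplectic space $T_{x}E/G_{x}$; if $\omega|_{T_{x}F}=0$ this forces $\dim F\le \dim E - m$, contradicting $\dim F\ge \dim E - m + 1$ from (b). That is exactly the paper's argument, phrased as $2\dim\pi_{n-1}^{+}(F)>\dim\pi_{n-1}^{+}(E)$. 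So the obstacle you identify --- that $\pi_{n-1}^{+}(F)$ lies in the singular locus of $Z_{n-1}$ --- is not an obstacle at all: Kaledin's result provides the symplectic form on the stratum $\pi_{n-1}^{+}(E)$, and that is the key step.
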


\begin{proof}
We recall two geometric natures of the image of
 a birational contraction from a symplectic manifold.
Let $\pi_{n-1}^{+} (E)^{\circ}$ be the smooth locus 
of the image of $E$ and $\omega$ a symplectic form
on $X_n$. By \cite[Lemma 2.9]{MR2283801}, $\pi_{n-1}^{+} (E)^{\circ}$ carries
a symplectic form $\omega_E$ such that 
$\nu^*\omega_E=\omega$ on $E$.
Moreover 
\[
\dim \pi_{n-1}^{+} (E) = \dim X_{n} - 2(\dim X_{n} - \dim E) = 2\dim E - \dim X_{n}
\]
by \cite[Theorem 1.2 (ii)]{MR1966025}. 
We estimate the dimension of $\pi_{n-1}^{+} (F)$.
Since $f_n (E) \ne S_n$, $\dim F \ge \dim E - (1/2)\dim X_n + 1$.
Let $C$ be a contracted curve by $\pi_{n-1}^{+}$.
Since $D_{n}$ is $\pi_{n-1}^{+}$-ample, $D_{n}.C > 0$. 
Hence $C$
is not
contracted by $f_n$ 
and all curves contained in $F$ is not contracted by $\pi_{n-1}^{+}$.
 We have
\[
\dim \pi_{n-1}^{+} (F) \ge \dim E - \frac{1}{2}\dim X_{n} + 1.
\]
This implies $2\dim \pi_{n-1}^{+} (F) > \dim \pi_{n-1}^{+} (E)$ and we are done.
\end{proof}
 We complete the proof of Theorem \ref{main}. 
\end{proof}

\begin{proof}[Proof of Corollary \ref{semi-rigidity}]
 Let $Y$ be the common resolution of $X_1$ and $X_2$.
 We denote by 
 $\nu_i$, $(i=1,2)$ the birational morphisms
 $Y \to X_i$.
 \begin{claim}\label{bridge}
 Let $D_{S_1}$ be an ample divisor on $S_1$.
 There exists a $\mathbb{Q}$-Cartier divisor $D_{S_2}$
 such that 
 \[
 (f_1\circ \nu_1)^*D_{S_1}\sim (f_2\circ \nu_2)^*D_{S_2}
 \]
 \end{claim}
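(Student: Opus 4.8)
The plan is to transport $D_{S_1}$ to $X_2$ along $\phi_1$, invoke Theorem \ref{main} to see that the transported divisor is nef, upgrade this to semiampleness, and then show that the resulting fibration factors through $f_2$, which furnishes $D_{S_2}$. The key point that makes the argument nonformal is that $h_1^{*}D_{S_1}$ must be shown to be trivial on \emph{every} fibre of $f_2\circ\nu_2$, not merely on a general one.

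Concretely, set $L_2:=(\phi_1^{-1})_{*}(f_1^{*}D_{S_1})$, the proper transform on $X_2$; this is well defined because $\phi_1$, being a birational map of projective symplectic manifolds, is an isomorphism in codimension one, and it identifies the section rings, so $\kappa(L_2)=\kappa(f_1^{*}D_{S_1})=\dim S_1=\dim S_2$. The rational map $\phi_2\circ f_2=f_1\circ\phi_1:X_2\dasharrow S_1$ is an almost holomorphic fibration whose general fibre, being a general fibre of $f_2$, is Lagrangian, and, taking $Y$ as the model resolving $\phi_1$, its pull back of $D_{S_1}$ in the sense of Definition \ref{pull_back} is exactly $L_2$; hence Theorem \ref{main} gives that $L_2$ is nef. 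Since a birational map between abelian varieties is an isomorphism, $\phi_1$ restricts to an isomorphism between a general fibre of $f_2$ and the corresponding general fibre of $f_1$, so $L_2$ is trivial on a general fibre of $f_2$; therefore $\nu(L_2)\le\dim S_2=\kappa(L_2)$, so $L_2$ is nef and good, and by the semiampleness criterion used for Lemma \ref{backword} (\cite[Theorem 1.1]{MR2779478}) it is semiample. Let $g_2:X_2\to T$ be the morphism it defines; since $L_2$ and $f_1^{*}D_{S_1}$ have the same section ring, $T\cong S_1$, and a general fibre of $g_2$ is a general fibre of $f_2$. The crucial step is then to verify that $g_2$ is constant on every fibre of $f_2$; granting this, $\phi_2$ extends to a morphism $\bar\phi_2:S_2\to S_1$ with $g_2=\bar\phi_2\circ f_2$, so $L_2\sim_{\mathbb{Q}}g_2^{*}A=f_2^{*}(\bar\phi_2^{*}A)$ for $A$ ample on $T\cong S_1$, and $D_{S_2}:=\bar\phi_2^{*}A$ is $\mathbb{Q}$-Cartier with $L_2\sim_{\mathbb{Q}}f_2^{*}D_{S_2}$. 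Finally, because $|mf_1^{*}D_{S_1}|$ is base point free its pull back to $Y$ carries no fixed component, which yields $\nu_1^{*}(f_1^{*}D_{S_1})=\nu_2^{*}L_2$ on $Y$; combining, $(f_1\circ\nu_1)^{*}D_{S_1}=\nu_1^{*}(f_1^{*}D_{S_1})=\nu_2^{*}L_2\sim_{\mathbb{Q}}\nu_2^{*}(f_2^{*}D_{S_2})=(f_2\circ\nu_2)^{*}D_{S_2}$.

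The hard part is showing that $g_2$ contracts every fibre of $f_2$, equivalently that $\phi_2$ is a morphism: nefness of $L_2$ together with $\kappa(L_2)=\nu(L_2)$ only gives semiampleness, and to rule out a curve $C$ contained in a special fibre of $f_2$ with $L_2\cdot C>0$ one has to use the symplectic geometry. Such a $C$ would lie in a component $E$ of the exceptional locus of $\phi_1$, and running the dichotomy of Claims \ref{vanish}--\ref{nonvanish} for $E$ --- on one side the restriction of a symplectic form to the relevant fibre vanishes by torsion-freeness of a higher direct image, on the other side it cannot vanish because the dimension estimate of \cite[Theorem 1.2 (ii)]{MR1966025} fails --- should produce a contradiction. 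One also has to check that $\nu_1^{*}(f_1^{*}D_{S_1})=\nu_2^{*}L_2$ on $Y$ with no fixed exceptional discrepancy; here it matters that $\phi_1$ is an isomorphism in a neighbourhood of a general fibre of $f_2$, so that the base locus of $|mL_2|$, hence the fixed part of its pull back to $Y$, is confined to divisors already contracted by $f_2\circ\nu_2$ and, by the base point freeness upstairs, is in fact empty. The remaining verifications --- that $\phi_1$ is an isomorphism on general fibres, the identification $T\cong S_1$, and the Definition \ref{pull_back} bookkeeping --- are routine.
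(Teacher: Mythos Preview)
Your route via nefness (Theorem~\ref{main}) and then semiampleness of $L_2$ is correct as far as it goes, but it leaves the essential step open and the proposed fix does not work. The paper's proof is much shorter and avoids the detour entirely. After establishing that $L_2:=(\nu_2)_*(f_1\circ\nu_1)^*D_{S_1}$ is nef by Theorem~\ref{main}, the paper invokes two structural facts about the Lagrangian fibration $f_2$: it is \emph{equidimensional} by \cite[Theorem~1]{MR1783616}, and consequently $S_2$ is $\mathbb{Q}$-factorial since $X_2$ is smooth. This lets one define $D_{S_2}$ directly as the minimal $\mathbb{Q}$-divisor on $S_2$ with $f_2^{*}D_{S_2}\ge L_2$; then $H:=L_2-f_2^{*}D_{S_2}$ is $f_2$-nef with $-H$ effective and, by minimality, not containing any full pullback, which is a contradiction unless $H=0$. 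No semiampleness, no auxiliary fibration $g_2$, and no factoring problem arise.

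Your ``hard part'' --- that $g_2$ contracts every fibre of $f_2$, equivalently that $\phi_2$ extends to a morphism --- is precisely (half of) the conclusion of Corollary~\ref{semi-rigidity}, which is what Claim~\ref{bridge} is being used to establish; so your strategy amounts to reproving the Corollary inside the Claim. The mechanism you suggest, rerunning Claims~\ref{vanish}--\ref{nonvanish}, does not transplant: those claims concern a component $E$ of the exceptional locus of an actual small \emph{morphism} $\pi_{n-1}^{+}:X_n\to Z_{n-1}$ on which $D_n$ is $\pi_{n-1}^{+}$-ample, and the contradiction in Claim~\ref{nonvanish} hinges on the dimension formula $\dim\pi_{n-1}^{+}(E)=2\dim E-\dim X_n$ from \cite{MR1966025}. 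In your situation $\phi_1$ is only a birational \emph{map}; there is no contraction morphism along which $L_2$ is relatively ample, hence no image $\pi(E)$ whose dimension can be compared to $\dim\pi(F)$. The ingredient you are missing is exactly the one the paper supplies: equidimensionality of $f_2$ together with $\mathbb{Q}$-factoriality of $S_2$ and the ensuing relative negativity argument.
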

 \begin{proof}
 By Theorem \ref{main}, 
 $(\nu_2)_*(f_1\circ \nu_1)^*D_{S_1}$ is nef.
 By \cite[Theorem 1]{MR1783616}, $f_2$ is equidimensional.
 Since $X_2$ is smooth, $S_2$ is $\mathbb{Q}$-factorial.
 By the definition, $(\nu_2)_*(f_1\circ \nu_1)^*D_{S_1}$ does not
 contain a general fibre of $f_2$. Thus
  $f_2((\nu_2)_*(f_1\circ \nu_1)^*D_{S_1})$ defines a divisor of $S_2$.
  We define
  \[
  D_{S_2} := \mathrm{min}
  \{ \Delta |  \mbox{a $\mathbb{Q}$-divisor on
   $S_2$ such that $f^*_2\Delta -
  (\nu_2)_*(f_1\circ \nu_1)^*D_{S_1} \ge 0$}
  \}.
  \]
  If $f_2^*D_{S_2} \ne (\nu_2)_*(f_1\circ \nu_1)^*D_{S_1}$,
  then 
  $H:= - f_2^*D_{S_2} +(\nu_2)_*(f_1\circ \nu_1)^*D_{S_1} $
  is $f_2$-nef and $-H$ is effective.  
  That is a contradiction.
 \end{proof}
 We go back to the proof of Corollary \ref{semi-rigidity}.
 Let $C$ be a curve contained in a fibre of $f_2\circ \nu_2$. Then 
 $C$ is contracted by $f_1 \circ \nu_1$ because $(f_1\circ \nu_1)(C).D_{S_1}=0$
 by Claim \ref{bridge}. By the same argument, a curve contained in a fibre
 of $f_1 \circ \nu_1$ is contracted by $f_2 \circ \nu_2$. Thus we are done.
\end{proof}

\bibliographystyle{halpha}

\bibliography{almost_2011_Oct29}

\end{document}